\documentclass[11pt,a4paper]{article}
\usepackage[english]{babel}
\usepackage{amsmath,amsfonts,amsthm,amssymb,amsbsy,upref,color,graphicx,amscd,hyperref,makeidx,enumerate,bbm,comment}
\usepackage[a4paper,margin=2.73truecm]{geometry}
\usepackage[active]{srcltx}
\usepackage[latin1]{inputenc}

\DeclareMathOperator{\dive}{div}

\def\ds{\displaystyle}
\def\eps{{\varepsilon}}

\def\O{\Omega}
\def\R{\mathbb{R}}

\def\A{\mathcal{A}}

\def\HH{\mathcal{H}}

\newcommand{\be}{\begin{equation}}
\newcommand{\ee}{\end{equation}}
\newcommand{\bib}[4]{\bibitem{#1}{\sc#2: }{\it#3. }{#4.}}

\numberwithin{equation}{section}
\theoremstyle{plain}
\newtheorem{teo}{Theorem}[section]

\newtheorem{prop}[teo]{Proposition}

\theoremstyle{remark}
\newtheorem{oss}[teo]{Remark}
\newtheorem{exam}[teo]{Example}
 %media integrale displayed
 %media integrale nel testo

\title{Symmetry breaking for a problem in optimal insulation}

\author{Dorin Bucur, Giuseppe Buttazzo, Carlo Nitsch}

%\date{July 12, 2015}

\begin{document}

\maketitle

\begin{abstract}
We consider the problem of optimally insulating a given domain $\O$ of $\R^d$; this amounts to solve a nonlinear variational problem, where the optimal thickness of the insulator is obtained as the boundary trace of the solution. We deal with two different criteria of optimization: the first one consists in the minimization of the total energy of the system, while the second one involves the first eigenvalue of the related differential operator. Surprisingly, the second optimization problem presents a symmetry breaking in the sense that for a ball the optimal thickness is nonsymmetric when the total amount of insulator is small enough. In the last section we discuss the shape optimization problem which is obtained letting $\O$ to vary too.
\end{abstract}

\textbf{Keywords:} optimal insulation, symmetry breaking, Robin boundary conditions

\textbf{2010 Mathematics Subject Classification:} 49J45, 35J25, 35B06, 49R05

%%%%%%%%%%%%%%%%%%%%%%%%%%%%%%
\section{Introduction}\label{sintro}

In the present paper we deal with the problem of determining the best distribution of a given amount of insulating material around a fixed domain $\O$ of $\R^d$ which represents a thermally conducting body; the thickness of the insulating material is assumed very small with respect to the size of $\O$, so the material density is assumed to be a nonnegative function defined on the boundary $\partial\O$. A rigorous approach is to consider a limit problem when the thickness of the insulating layer goes to zero and simultaneously the conductivity in the layer goes to zero; this has been studied in \cite{brca80}, \cite{acbu86}, where the family of functionals
$$F_\eps(u)=\frac12\int_\O|\nabla u|^2\,dx+\frac\eps2\int_{\Sigma_\eps}|\nabla u|^2\,dx
-\int_\O fu\,dx$$
is considered on the Sobolev space $H^1_0(\O\cup\Sigma_\eps)$, being $\Sigma_\eps$ a thin layer of variable thickness $\eps h(\sigma)$ around the boundary $\partial\O$
$$\Sigma_\eps=\big\{\sigma+t\nu(\sigma)\ :
\ \sigma\in\partial\O,\ 0\le t<\eps h(\sigma)\big\}.$$
The temperature $u$ of the conducting body $\O$, with heat sources $f\in L^2(\O)$ and insulating distribution $h$, is then given by the minimization of the functional $F_\eps$ on $H^1_0(\O\cup\Sigma_\eps)$ or equivalently by the solution of the PDE
$$\begin{cases}
-\Delta u=f&\hbox{in }\O\\
-\Delta u=0&\hbox{in }\Sigma_\eps\\
u=0&\hbox{on }\partial(\O\cup\Sigma_\eps)\\
\ds\frac{\partial u^-}{\partial\nu}
=\eps\frac{\partial u^+}{\partial\nu}&\hbox{on }\partial\O.
\end{cases}$$
The last equality, represents a transmission condition across the boundary $\partial\O$, where $u^-$ and $u^+$ respectively denote the traces of $u$ in $\O$ and in $\Sigma_\eps$. Passing to the limit as $\eps\to0$ (in the sense of $\Gamma$-convergence) in the sequences of energy functionals provides the limit energy, given by
$$E(u,h)=\frac12\int_\O|\nabla u|^2\,dx+\frac12\int_{\partial\O}\frac{u^2}{h}\,d\HH^{N-1}-\int_\O fu\,dx.$$
Therefore the temperature $u$ solves the minimum problem
\be\label{energy}
E(h)=\min\big\{E(u,h)\ :\ u\in H^1(\O)\big\}
\ee
or equivalently the PDE
\be\label{pde}
\begin{cases}
-\Delta u=f&\hbox{in }\O\\
\ds h\frac{\partial u}{\partial\nu}+u=0&\hbox{on }\partial\O.
\end{cases}
\ee
Note that the boundary condition on $\partial\O$ is not any more the Dirichlet one, but is of a Robin type.

We are interested in determining the density $h(\sigma)$ which provides the best insulating performances, once the total amount of insulator is fixed, that is we consider density functions $h$ in the class
$$\HH_m=\left\{h:\partial\O\to\R\hbox{ measurable, }h\ge0,\ \int_{\partial\O}h\,d\HH^{d-1}=m\right\}.$$
The following two different optimization problems then arise.
\begin{itemize}
\item In a first problem the heat sources $f$ are given, and so the optimization problem we consider is written as
\be\label{pb1}
\min\big\{E(h)\ :\ h\in\HH_m\big\}.
\ee
where $E(h)$ is given by \eqref{energy}. Note that the energy $E(h)$ can be written in terms of the solution $u_h$ of the PDE \eqref{pde}; indeed, multiplying both sides of \eqref{pde} by $u_h$ and integrating by parts gives
$$E(h)=-\frac12\int_\O fu_h\,dx.$$
Thus, the minimization of $E(h)$ in \eqref{pb1} corresponds to the choice of $h$ which maximizes the quantity $\int_\O fu_h\,dx$. In particular, when $f=1$, this first optimization problem consists in placing $h$ around $\O$ in the best way to obtain the maximal average temperature in $\O$.

\item The second optimization problem we consider deals with the operator $\A$ written in a weak form as
\be\label{operator}
\langle\A u,\phi\rangle=\int_\O\nabla u\nabla\phi\,dx+\int_{\partial\O}\frac{u\phi}{h}\,d\HH^{d-1}
\ee
and the corresponding heat equation
$$\partial_tu+\A u=0,\qquad u(0,x)=u_0(x).$$
In this case the long time behavior of the temperature $u(t,x)$ is governed by the first eigenvalue $\lambda(h)$ of the operator $\A$, given by the Rayleigh quotient
$$\lambda(h)=\inf\left\{\frac{\int_\O|\nabla u|^2\,dx+\int_{\partial\O}h^{-1}u^2\,d\HH^{d-1}}{\int_\O u^2\,dx}\ :\ u\in H^1(\O),\ u\ne0\right\}.$$
The best insulation is in this case determined by the minimum problem
\be\label{pb2}
\min\big\{\lambda(h)\ :\ h\in\HH_m\big\}.
\ee
\end{itemize}

We will show that both problems \eqref{pb1} and \eqref{pb2} admit an optimal solution that can be recovered by solving some suitable auxiliary variational problems. In the energy case of problem \eqref{pb1} the optimal density $h_{opt}$ is unique and so, when $\O$ is a ball and $f$ radial, $h_{opt}$ is constant. On the contrary, for problem \eqref{pb2} we will see that a surprising symmetry breaking occurs: when $\O$ is a ball and $m$ is small enough the optimal density $h_{opt}$ is nonconstant.

In the last section of the paper we consider the shape optimization problems that arise from \eqref{pb1} and \eqref{pb2} when we let $\O$ to vary too, among domains having a prescribed volume.

%Even if it is not the aim of the present paper, in the last section we will point out the two shape optimization problems that arise from \eqref{pb1} and \eqref{pb2} when we let $\O$ to vary too, among domains having a prescribed volume.

%In both cases the existence of an optimal domain $\O_{opt}$ does not follow from any known result and probably requires a rather delicate analysis.

%%%%%%%%%%%%%%%%%%%%%%%%%%%%%%
\section{The energy problem}\label{senergy}

In this section we consider the optimization problem \eqref{pb1}; this problem was already considered in \cite{bu88} (see also \cite{bubu05}), for the sake of completeness we summarize the main results. The optimization problem we deal with is
$$\min_{h\in\HH_m}\ \min_{u\in H^1(\O)}\left\{\frac12\int_\O|\nabla u|^2\,dx+\frac12\int_{\partial\O}\frac{u^2}{h}\,d\HH^{N-1}-\int_\O fu\,dx\right\}$$
which, interchanging the two minimizations, gives
$$\min_{u\in H^1(\O)}\ \min_{h\in\HH_m}\left\{\frac12\int_\O|\nabla u|^2\,dx+\frac12\int_{\partial\O}\frac{u^2}{h}\,d\HH^{N-1}-\int_\O fu\,dx\right\}.$$
The minimum with respect to $h$ is easy to compute explicitly and, for a fixed $u\in H^1(\O)$, which does not identically vanish on $\partial\O$, is reached for
$$h=m\frac{|u|}{\int_{\partial\O}|u|\,d\HH^{d-1}}\;;$$
the choice of $h$ is irrelevant when $u\in H^1_0(\O)$.
Therefore, the optimization problem \eqref{pb1} can be rewritten as
\be\label{pb1aux}
\min\left\{\frac12\int_\O|\nabla u|^2\,dx+\frac{1}{2m}\Big(\int_{\partial\O}|u|\,d\HH^{d-1}\Big)^2-\int_\O fu\,dx\ :\ u\in H^1(\O)\right\}.
\ee
The existence of a solution for problem \eqref{pb1aux} follows by the Poincar\'e-type inequality (see Proposition 1.5.3 of \cite{bubu05})
$$\int_\O u^2\,dx\le C\left[\int_\O|\nabla u|^2\,dx+\Big(\int_{\partial\O}|u|\,d\HH^{d-1}\Big)^2\right]$$
which implies the coercivity of the functional in \eqref{pb1aux}. The solution is also unique, thanks to the result below.

\begin{prop}\label{unique}
Assume $\O$ is connected. Then, the functional
$$u\mapsto F(u)=\frac12\int_\O|\nabla u|^2\,dx+\frac{1}{2m}\Big(\int_{\partial\O}|u|\,d\HH^{d-1}\Big)^2$$
is strictly convex on $H^1(\O)$, hence for every $f\in L^2(\O)$ the minimization problem \eqref{pb1aux} admits a unique solution.
\end{prop}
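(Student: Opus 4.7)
The plan is to split $F = F_1 + F_2$ with $F_1(u) = \tfrac12\int_\O|\nabla u|^2\,dx$ and $F_2(u) = \tfrac{1}{2m}T(u)^2$, where $T(u):=\int_{\partial\O}|u|\,d\HH^{d-1}$. Each summand is convex on $H^1(\O)$: $F_1$ is a positive quadratic form, while $F_2$ is the composition of the convex seminorm $T$ with the convex map $x\mapsto x^2$ on $[0,\infty)$. Suppose strict convexity fails, so that there exist $u\ne v$ in $H^1(\O)$ and $t\in(0,1)$ with equality in the convexity inequality for $F$; then equality holds simultaneously for $F_1$ and $F_2$. Equality in $F_1$ forces $\nabla u = \nabla v$ a.e., and by connectedness of $\O$ this yields $u = v + c$ for some constant $c\in\R$. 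Equality in $F_2$, combined with the triangle inequality for $T$ and strict convexity of squaring on $[0,\infty)$, forces both $T(u) = T(v)$ and $T(tu+(1-t)v) = tT(u)+(1-t)T(v)$.

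Substituting $u = v + c$, introduce the convex function $\phi(s) := T(v+s) = \int_{\partial\O}|v+s|\,d\HH^{d-1}$. The previous identities read $\phi(0) = \phi(tc) = \phi(c)$, so convexity forces $\phi$ to be constant on the closed interval with endpoints $0$ and $c$ (WLOG $c>0$). Differentiating a.e., $\phi'(s) = \HH^{d-1}(\{v>-s\}) - \HH^{d-1}(\{v<-s\})$ must vanish for a.e.\ $s \in (0,c)$; since the first term is non-decreasing in $s$ and the second is non-increasing, both must in fact be constant throughout $[0,c]$. This forces $\HH^{d-1}(\{-c<v<0\})=0$ on $\partial\O$; in other words, the trace of $v$ essentially takes its values in $(-\infty,-c]\cup[0,\infty)$.

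To conclude $c = 0$, apply the $1$-Lipschitz truncation $\psi(v) := \min(\max(v,-c),0) \in H^1(\O)$; by the previous step, its trace on $\partial\O$ takes only the values $-c$ and $0$ a.e., so it equals $-c\,\ind_A$ for $A := \{v\le-c\}\cap\partial\O$. This trace belongs to $H^{1/2}(\partial\O)$, and a standard trace-theoretic fact rules out non-trivial indicator functions from $H^{1/2}(\partial\O)$ (their Gagliardo seminorms diverge). Hence either $\HH^{d-1}(A) = 0$ or $A$ exhausts $\partial\O$; in either case $v$ has a.e.\ constant sign on $\partial\O$, and plugging this into $T(v+c) = T(v)$ collapses the identity to $c\,\HH^{d-1}(\partial\O)=0$, giving $c=0$ and therefore $u=v$. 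I expect the main obstacle to be precisely this last step: ruling out non-trivial indicators from $H^{1/2}(\partial\O)$ tacitly uses the regularity of $\partial\O$ and the dimension being $d\ge 2$ (the case $d=1$ is genuinely pathological, since there translates of a single function can share all three of gradient, $T$-value and integrability of signs).
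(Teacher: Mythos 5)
Your proof is correct, and up to the reduction $u=v+c$ (parallelogram identity for the gradient term plus connectedness of $\O$) it coincides with the paper's argument; the divergence is in how the boundary term is handled. The paper argues pointwise: if $u$ and $v$ have opposite signs on a subset of $\partial\O$ of positive measure, the triangle inequality is strict there; otherwise it invokes the identity $\big(\int|u+v|\big)^2-2\big(\int|u|\big)^2-2\big(\int|v|\big)^2=-\big(\int|u-v|\big)^2$. You instead study the one-variable convex function $\phi(s)=T(v+s)$, show that its constancy on $[0,c]$ forces the trace of $v$ to avoid the gap $(-c,0)$, and then exclude this via the fact that a nontrivial indicator cannot belong to $H^{1/2}$ of a connected boundary. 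Your route is longer and imports a trace-space fact (hence boundary regularity and $d\ge2$), but it buys something real: the paper's ``same sign'' identity tacitly assumes the common sign is constant over all of $\partial\O$, whereas the pointwise-same-sign alternative only yields $\int|u|-\int|v|=c\,\big(\HH^{d-1}\{v\ge0\}-\HH^{d-1}\{v\le-c\}\big)$, which can vanish for $c\ne0$ if the trace of $v$ jumps across $[-c,0]$ with the two sets balancing in measure. That is exactly the scenario your $H^{1/2}$-indicator step rules out, and your closing remark about $d=1$ (two boundary points with counting measure, where strict convexity genuinely fails for suitable translates) shows the scenario is not vacuous in general --- it is only impossible for $d\ge2$ because of the trace-space obstruction. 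So your argument is not merely a different decomposition; it closes a small case left open by the paper's own computation, at the price of one nonelementary ingredient.
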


\begin{proof}
%See Proposition 1.5.4 of \cite{bubu05}.
Let $u_1,u_2\in H^1(\O)$; since the term
$$\frac{1}{2m}\Big(\int_{\partial\O}|u|\,d\HH^{d-1}\Big)^2$$
is convex and
$$\int_\O\Big|\frac{\nabla u_1+\nabla u_2}{2}\Big|^2\,dx
=\int_\O\frac{|\nabla u_1|^2+|\nabla u_2|^2}{2}\,dx
-\int_\O\Big|\frac{\nabla u_1-\nabla u_2}{2}\Big|^2\,dx\;,$$
we have

$$F\Big(\frac{u_1+u_2}{2}\Big)<\frac{F(u_1)+F(u_2)}{2}$$
whenever $u_1-u_2$ is nonconstant. It remains to consider the case $u_1-u_2=c$ with $c$ constant.
If $u_1$ and $u_2$ have a different sign on a subset $B$ of $\partial\O$ with $\HH^{d-1}(B)>0$, we have
$$|u_1+u_2|<|u_1|+|u_2|\qquad\hbox{$\HH^{d-1}$-a.e. on }B\;,$$
which again gives the strict convexity of the functional $F$. Finally, if $u_1$ and $u_2$ have the same sign on $\partial\O$, we have
\[\begin{split}
&\Big(\int_{\partial\O}|u_1+u_2|\,d\HH^{d-1}\Big)^2
-2\Big(\int_{\partial\O}|u_1|\,d\HH^{d-1}\Big)^2
-2\Big(\int_{\partial\O}|u_2|\,d\HH^{d-1}\Big)^2\\
&\qquad=-\Big(\int_{\partial\O}|u_1-u_2|\,d\HH^{d-1}\Big)^2
=-c^2\HH^{d-1}(\partial\O)
\end{split}\]
which gives again the strict convexity of $F$ and concludes the proof.
\end{proof}

\begin{exam}\label{oneball}
Let $\O=B_R$ be the ball of radius $R$ in $\R^d$ and let $f=1$; then, by the uniqueness result of Proposition \ref{unique} the optimal solution $u$ of the minimization problem \eqref{pb1aux} is radially symmetric and is given by
$$u(r)=\frac{R^2-r^2}{2d}+c$$
for a suitable nonnegative constant $c$. The value of $c$ can be easily computed; indeed the energy of the function $u$ above is
\[\begin{split}
&\frac{d\omega_d}{2}\int_0^R r^{d-1}\Big(\frac{r}{d}\Big)^2\,dr
+\frac{1}{2m}(cd\omega_d R^{d-1})^2
-d\omega_d\int_0^R r^{d-1}\Big(\frac{R^2-r^2}{2d}+c\Big)\,dr\\
&=-\frac{\omega_d}{2d(d+2)}R^{d+2}+c^2\frac{d^2\omega_d^2}{2m}R^{2d-2}-c\omega_d R^d
\end{split}\]
where $\omega_d$ denotes the Lebesgue measure of the unit ball in $\R^d$. Optimizing with respect to $c$ we obtain
$$c_{opt}=\frac{m}{d^2\omega_d R^{d-2}}\;,$$
with minimal energy
$$-\frac{R^2}{2d}\Big(\frac{\omega_d R^d}{d+2}+\frac{m}{d}\Big)\;.$$
\end{exam}

\begin{oss}
If in Propostion \ref{unique} the domain $\O$ is not connected , then the strict convexity of the functional $F$ does not occur anymore, so that uniqueness in the minimization problem \eqref{pb1aux} does not hold. Nevertheless, the non-uniqueness issue is a matter of constants. Indeed, assume for simplicity that $\O$ has two connected components $\O_1,\O_2$ and $u,v \ge 0$ are such that
for some $t \in (0,1)$, $tF(u)+(1-t) F(v)= F(tu+(1-t)v)$. Then, one gets
$$\int_\O|\nabla u|^2\,dx= \int_\O|\nabla v|^2\,dx \;\;\mbox{ and } \;\; \int_{\partial\O}u\,d\HH^{d-1}= \int_{\partial\O}v\,d\HH^{d-1}.$$
This implies that $u\big|_{\O_1}-v\big|_{\O_1}=c_1$ and $u\big|_{\O_2}-v\big|_{\O_2}=c_2$ with
$$c_1\HH^{d-1}(\partial\O_1)+c_2\HH^{d-1}(\partial\O_2)=0\;.$$
\end{oss}

\begin{exam}\label{twoballs}
Let $\O=B_{R_1}\cup B_{R_2}$ be the domain of $\R^d$ made by the union of two disjoint balls of radius $R_1$ and $R_2$ respectively, and let $f=1$ as in Example \ref{oneball}. On each ball $B_{R_j}$ ($j=1,2$) the optimal solution $u$ of the minimization problem \eqref{pb1aux} is radially symmetric and is given by
$$u(r)=\frac{R_j^2-r^2}{2d}+c_j\qquad j=1,2$$
for suitable values of the constants $c_1,c_2\ge0$. Repeating the calculations made in Example \ref{oneball} we have that the energy of $u$ is given by
$$-\frac{\omega_d}{2d(d+2)}\big(R_1^{d+2}+R_2^{d+2}\big)+\frac{d^2\omega_d^2}{2m}\big(c_1R_1^{d-1}+c_2R_2^{d-1}\big)^2-\omega_d\big(c_1R_1^d+c_2R_2^d\big)\;.$$
Therefore, optimizing with respect to $c_1$ and $c_2$, we obtain easily that:
\begin{itemize}
\item if $R_1=R_2=R$ any choice of $c_1$ and $c_2$ with
$$c_1+c_2=\frac{m}{d^2\omega_d R^{d-2}}$$
is optimal, and provides the minimal energy
$$-\frac{R^2}{2d}\Big(\frac{\omega_d R^d}{d+2}+\frac{m}{d}\Big)\;.$$
\item if $R_1\ne R_2$ then necessarily one between $c_1$ and $c_2$ has to vanish and, if $R_1<R_2$, the optimal choice is
$$c_1=0\;,\qquad c_2=\frac{m}{d^2\omega_d R_2^{d-2}}\;.$$
In other words, if $R_1\ne R_2$ it is more efficient to concentrate all the insulator around the largest ball, leaving the smallest one unprotected.
\end{itemize}
\end{exam}

\begin{oss}
We notice that the Euler-Lagrange equation of the variational problem \eqref{pb1aux} is
\[\begin{cases}
-\Delta u=f\quad\mbox{in }\O,\\
\ds\frac{\partial u}{\partial\nu}=-\frac1m\int_{\partial\O}u\,d\sigma\quad\mbox{on }\partial\O\cap\{u>0\},\\
\ds\frac{\partial u}{\partial\nu}\ge-\frac1m\int_{\partial\O}u\,d\sigma\quad\mbox{on }\partial\O\cap\{u=0\}.
\end{cases}\]
Then, if the solution $u$ is positive on $\partial\O$, the normal derivative $\partial u/\partial\nu$ is constant along $\partial\O$. This constant can be easily computed integrating both sides of the PDE above, and we obtain
$$\frac{\partial u}{\partial\nu}=-\frac{1}{|\partial\O|}\int_\O f\,dx\;.$$
\end{oss}

Coming back to the optimization problem \eqref{pb1}, the optimal density $h$ can be recovered by determining the (unique in the case $\O$ connected) solution $\bar u$ of the auxiliary problem \eqref{pb1aux} and then taking
$$h_{opt}=m\frac{\bar u}{\int_{\partial\O}|\bar u|\,d\HH^{d-1}}$$
if $\bar u\notin H^1_0(\O)$. As a consequence of the uniqueness above, if $\O$ is a ball and $f$ is radial, the optimal density $h_{opt}$ is constant along $\partial\O$.

It is interesting to notice that, taking $tu$ instead of $u$ in \eqref{pb1aux}, and optimizing with respect to $t$, gives the equivalent formulation of the auxiliary problem \eqref{pb1aux}
\begin{equation}\label{pb1aux.1}
\min\left\{\frac{\int_\O|\nabla u|^2\,dx+\frac1m\Big(\int_{\partial\O}|u|\,d\HH^{d-1}\Big)^2}{\Big(\int_\O fu\,dx\Big)^2}\ :\ u\in H^1(\O)\right\}\;.
\end{equation}

%%%%%%%%%%%%%%%%%%%%%%%%%%%%%%
\section{The eigenvalue problem}\label{seigen}

In this section we consider the optimization problem for the first eigenvalue of the operator \eqref{operator}, that is the minimization problem \eqref{pb2}. Similarly to what done in the previous section, we can interchange the two minimizations, obtaining the auxiliary problem
\be\label{pb2aux}
\min\left\{\frac{\int_\O|\nabla u|^2\,dx+\frac1m\Big(\int_{\partial\O}|u|\,d\HH^{d-1}\Big)^2}{\int_\O u^2\,dx}\ :\ u\in H^1(\O)\right\}\;.
\ee
The existence of a solution $\bar u$ easily follows from the direct methods of the calculus of variations; we may also assume that $\bar u$ is nonnegative. Again, this gives the optimal density $h_{opt}$ by
$$h_{opt}=m\frac{\bar u}{\int_{\partial\O}\bar u\,d\HH^{d-1}}\;.$$
We want to investigate about the radial symmetry of $\bar u$ (hence on the fact that $h_{opt}$ is constant) in the case when $\O$ is a ball. The surprising symmetry breaking is contained in the following result.

\begin{teo}\label{nonsimm}
Let $\O$ be a ball. Then there exists $m_0>0$ such that the solution of the auxiliary variational problem \eqref{pb2aux} is radial if $m>m_0$, while the solution is not radial for $0<m<m_0$. As a consequence, the optimal density $h_{opt}$ is not constant if $m<m_0$.
\end{teo}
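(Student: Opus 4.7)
The plan is to exploit the rotational symmetry of the ball by decomposing every $u\in H^1(\O)$ as $u=u_0+u_\perp$, where $u_0$ is the spherical average of $u$ (a radial function) and $u_\perp$ is the non-radial remainder; the two contributions will be compared separately in the Rayleigh quotient $F$ appearing in \eqref{pb2aux}. As a preliminary step, since $|\nabla|u||\le|\nabla u|$ a.e.\ one has $F(|u|)\le F(u)$, so I may assume $u\ge 0$; this is crucial because it turns $\bigl(\int_{\partial\O}|u|\,d\HH^{d-1}\bigr)^2$ into $\bigl(\int_{\partial\O}u\,d\HH^{d-1}\bigr)^2=\bigl(\int_{\partial\O}u_0\,d\HH^{d-1}\bigr)^2$, the last identity following from $\int_{\partial\O}u_\perp\,d\HH^{d-1}=0$ (since $u_\perp$ contains only spherical-harmonic modes of degree $\ge 1$).

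Setting $N_{rad}(w):=\int_\O|\nabla w|^2\,dx+\frac{1}{m}\bigl(\int_{\partial\O}w\,d\HH^{d-1}\bigr)^2$, the orthogonality identities $\int_\O u_0u_\perp\,dx=0=\int_\O\nabla u_0\cdot\nabla u_\perp\,dx$ give
$$F(u)=\frac{N_{rad}(u_0)+\int_\O|\nabla u_\perp|^2\,dx}{\int_\O u_0^2\,dx+\int_\O u_\perp^2\,dx},$$
which is a convex combination of the radial Rayleigh quotient $N_{rad}(u_0)/\int_\O u_0^2\,dx$ and of $\int_\O|\nabla u_\perp|^2\,dx/\int_\O u_\perp^2\,dx$. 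The first is bounded below by $\lambda_0(m):=\inf\{N_{rad}(w)/\int_\O w^2\,dx:w\text{ radial}\}$; the second is bounded below by $\mu_1$, the first non-zero Neumann eigenvalue of $-\Delta$ on $\O$, via the Poincar\'e inequality in the $L^2$-complement of radial functions. Consequently $F(u)\ge\min\{\lambda_0(m),\mu_1\}$.

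The function $m\mapsto\lambda_0(m)$ is continuous and strictly decreasing in $m$, with $\lambda_0(m)\to 0$ as $m\to+\infty$ (test with $u\equiv 1$) and $\lambda_0(m)\to\lambda_1^D(\O)$ as $m\to 0^+$ (the penalty forces $u=0$ on $\partial\O$). On the ball one has $\mu_1<\lambda_1^D$ (the test function $u=x_1$ gives $\mu_1\le(d+2)/R^2$, strictly smaller than $\lambda_1^D$), so there is a unique $m_0>0$ with $\lambda_0(m_0)=\mu_1$. For $m>m_0$ one has $\lambda_0(m)<\mu_1$, and the convex-combination estimate forces $\int_\O u_\perp^2\,dx=0$ at any minimum, so the minimizer is radial. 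For $0<m<m_0$ one has $\mu_1<\lambda_0(m)$, and I would perturb the radial minimizer $\bar u$ by $v=\phi(r)Y_1(\theta)$ with $Y_1$ a first spherical harmonic: the first variation vanishes ($\int Y_1=0$ yields both $\int_{\partial\O}v\,d\HH^{d-1}=0$ and $\int_\O\bar u\,v\,dx=0$), while the second variation is a positive multiple of $\int_\O|\nabla v|^2\,dx-\lambda_0(m)\int_\O v^2\,dx$, which can be made negative by choosing $\phi$ so that $\int_\O|\nabla v|^2\,dx/\int_\O v^2\,dx$ is close enough to $\mu_1$. Hence the radial candidate is not optimal, and the minimizer must be non-radial.

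The main technical point, and the reason for the preliminary reduction $u\ge 0$, is that without it the boundary term $\bigl(\int_{\partial\O}|u|\,d\HH^{d-1}\bigr)^2$ couples different angular modes in a non-separable way; only after replacing $u$ by $|u|$ does the spherical-harmonic decomposition turn the full Rayleigh quotient into the convex combination above and reduce the symmetry-breaking question to the comparison of the radial Robin eigenvalue $\lambda_0(m)$ with the fixed threshold $\mu_1$.
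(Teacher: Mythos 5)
Your proof is correct, and for the harder implication --- $m>m_0$ forces radiality --- it takes a genuinely different and substantially shorter route than the paper. The paper proves that direction by contraposition: it shows a non-radial minimizer must vanish on part of $\partial\O$, writes the Euler--Lagrange system as a variational inequality, spherically symmetrizes, and tests the equation against the first nontrivial Neumann eigenfunction to get $(\lambda_m-\lambda_N)\int_\O uv\,dx=-\int_{\partial\O}\frac{\partial u}{\partial\nu}v\,d\sigma$, after which a careful sign analysis of both sides yields $\lambda_m\ge\lambda_N$. Your observation that for $u\ge0$ the nonlinear term $\bigl(\int_{\partial\O}|u|\,d\HH^{d-1}\bigr)^2$ sees only the spherical average $u_0$, so that the Rayleigh quotient in \eqref{pb2aux} splits as a mediant of the radial Robin quotient (bounded below by $\lambda_0(m)$) and the quotient of $u_\perp$ (bounded below by $\mu_1=\lambda_N$ simply because $\int_\O u_\perp\,dx=0$), replaces all of that with one orthogonality computation; the two thresholds agree, since your identity gives $\lambda_{m_0}=\lambda_0(m_0)=\mu_1=\lambda_N$, which is the paper's definition of $m_0$. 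The direction $m<m_0$ is essentially the paper's argument (they perturb the putative radial minimizer by the first Neumann eigenfunction; your second-variation computation with $v=\phi(r)Y_1$ is the same thing). Two points you should make explicit: (i) the identity $\int_{\partial\O}|\bar u+\eps v|\,d\HH^{d-1}=\int_{\partial\O}\bar u\,d\HH^{d-1}$ for small $\eps$ uses that the radial minimizer is strictly positive on $\partial\O$ (true: it is the first radial Robin eigenfunction); (ii) your decomposition shows every \emph{nonnegative} minimizer is radial, and to cover an arbitrary minimizer $u$ you should add that $|u|$ is also a minimizer, hence radial and strictly positive in $\O$, so $u$ has constant sign and is itself radial. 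What the paper's longer argument buys is structural information about the non-radial minimizers (after symmetrization they vanish exactly on a spherical cap), which is reused in the proof of Theorem \ref{nonstat}; your argument does not provide that, but it is not needed for the present statement.
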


\begin{proof}
Set for every $m>0$
$$J_m(u)={\int_\O|\nabla u|^2\,dx+\frac1m\Big(\int_{\partial\O}|u|\,d\sigma\Big)^2
\over\int_\O u^2\,dx}\;,\qquad\lambda_m=\min\Big\{J_m(u)\ :\ u\in H^1(\O)\Big\}\;.$$
Moreover, let us denote by $\lambda_N$ the first nonzero eigenvalue of the Neumann problem:
$$\lambda_N=\min\Big\{J_\infty(u)\ :\ u\in H^1(\O),\ \int_\O u\,dx=0\Big\}$$
and by $\lambda_D$ the first eigenvalue of the Dirichlet problem
$$\lambda_D=\min\Big\{J_\infty(u)\ :\ u\in H^1_0(\O)\Big\}\;.$$
Observe that 
$\lambda_m$ is decreasing in $m$ and 
$$\lambda_m\to0\qquad\hbox{as }m\to\infty,$$
 while
$$\lambda_m\to\lambda_D\qquad\hbox{as }m\to0.$$
Therefore there exists a unique positive $m$ such that $\lambda_{m}=\lambda_N$ and we want to prove that when $\O=B_R$ such a value is indeed the threshold value $m_0$ in the statement.

For $\O=B_R$ and $m<m_0$ assume by contradiction that the solution $u$ to the auxiliary problem \eqref{pb2aux} is radial; then it is positive and we may take $u+\eps v$ as a test function, where $v$ is the first eigenfunction of the Neumann problem. Without loss of generality we may assume that $\int_\O u^2\,dx=\int_\O v^2\,dx=1$. Using the fact that $u$ and $v$ are orthogonal, and that $\int_{\partial\O}v\,d\sigma=0$ we have
$$\lambda_m=J_m(u)\le J_m(u+\eps v)={\lambda_m+\eps^2\lambda_N\over1+\eps^2}$$
which implies $\lambda_m\le\lambda_N$ in contradiction to $m<m_0$ and $\lambda_m>\lambda_N$. 

Therefore radial solution do not exist when $m<m_0$, and the proof is complete if we show that non radial solutions only exist if $m\le m_0$. 

First we observe that, when $m\ne m_0$, positive solutions to the auxiliary problem \eqref{pb2aux} exists if and only if they are radial. In fact, on one hand we know that any radial solution is necessarily positive. On the other hand a positive function $u$ is a solution to problem \eqref{pb2aux} if and only if
\be\label{pb2pde1}
\begin{cases}
-\Delta u=\lambda_m u\quad\hbox{in }\O,\\
\ds\frac{\partial u}{\partial\nu}=-\frac1m\int_{\partial\O}u\,d\sigma\quad\hbox{on }\partial\O.
\end{cases}
\ee

Let $u(r,\omega)$ be such a solutions in polar coordinate $(r,\omega)\in\mathbb R^+\times \mathbb{S}^{d-1}$. By averaging $u$ along the angular coordinate $\omega$ and using the linearity of problem \eqref{pb2pde1}, we obtain a radial function solution to \eqref{pb2pde1} and therefore a radial solution to \eqref{pb2aux}. Since any two positive solutions to \eqref{pb2pde1} can be linearly combined to obtain a solution to the Neumann eigenvalue problem, the fact that $\lambda_m\ne \lambda_N$ implies that positive solutions to \eqref{pb2pde1} are unique up to a multiplicative constant.

%Arguing again by contradiction 
We assume now that for some $m\ne m_0$ there exists a non radial positive solution $u$ to \eqref{pb2aux}. From what we have just observed we know that such a solution cannot be positive and has to vanish somewhere on the boundary of $\O$. Let $u(r,\omega)$ be such a solution in polar coordinates. By using the spherical symmetrization on \eqref{pb2aux} we can always assume that there exists $\omega_0\in\mathbb{S}^{d-1}$ such that $u(r,\omega)$ is spherically symmetric in the direction $\omega_0$, that is:
\be\label{monotonicity}
u(r,\omega_1)\ge u(r,\omega_2)\qquad\hbox{ for all $0<r<R$, whenever }|\omega_1-\omega_0|\le|\omega_2-\omega_0|.
\ee
We also know that $u$ is a solution to
\be\label{pb2pde2}
\begin{cases}
-\Delta u=\lambda_m u\quad\mbox{in }\O,\\
\ds\frac{\partial u}{\partial\nu}=-\frac1m\int_{\partial\O}u\,d\sigma\quad\mbox{on }\partial\O\cap\{u>0\},\\
\ds\frac{\partial u}{\partial\nu}\ge-\frac1m\int_{\partial\O}u\,d\sigma\quad\mbox{on }\partial\O\cap\{u=0\}.
\end{cases}\ee
We multiply the eigenvalue equation in \eqref{pb2pde2} by the first nontrivial eigenfunction $v$ of the Neumann eigenvalue problem on $\O$ and integration yields
\be\label{antisym}
(\lambda_m-\lambda_N)\int_\O uv\,dx=-\int_{\partial\O}\frac{\partial u}{\partial\nu}v\,d\sigma
\ee
We can choose $v$ so that it is spherically symmetric in the direction $\omega_0$. In such a case we have $\int_\O uv\,dx>0$; indeed $v$ is also antisymmetric with respect to reflection about the equatorial plane of $\O$ orthogonal to $\omega_0$ and therefore $\int_\O uv\,dx>0$ as long as $u$ is nonradially symmetric. Moreover, the right-hand side of \eqref{antisym} is nonnegative; to see this fact, we notice that from \eqref{pb2pde2} we have that for a suitable $\delta>0$
$$\begin{cases}
u(R,\omega)=0\qquad\hbox{if }|\omega-\omega_0|\ge\delta\\
u(R,\omega)>0\qquad\hbox{otherwise.}
\end{cases}$$
From \eqref{monotonicity} and from the boundary conditions in \eqref{pb2pde2} we deduce that $\partial u/\partial\nu$ is spherically symmetric in the direction $\omega_0$ (in the sense of \eqref{monotonicity}) and then we can repeat the argument above.

\noindent Eventually we deduce $\lambda_m\ge\lambda_N$ and $m\le m_0$.
\end{proof}

\begin{oss}
The symmetry breaking result of Theorem \ref{nonsimm} has the following physical interpretation: if we want to insulate a given circular domain in order to have the slowest decay of the temperature, the best insulation around the boundary has a constant thickness if we have enough insulating material at our disposal. On the contrary, if the total amount of insulator is small, the best distribution of it around the boundary is nonconstant.
\end{oss}

We show now that when the dimension $d$ is one no symmetry breaking occurs. Indeed, taking as $\O$ the interval $]-1,1[$ and setting $\lambda_m=\omega^2$, the solution $u$ of equation \eqref{pb2pde2} is of the form
$$u(x)=\cos(\omega x+\alpha)$$
with the boundary conditions given by \eqref{pb2pde2}
$$\begin{cases}
|u'(a)|=\big(u(-1)+u(1)\big)/m&\hbox{if }u(a)>0\\
|u'(a)|\ds\le\big(u(-1)+u(1)\big)/m&\hbox{if }u(a)=0
\end{cases}
\qquad\hbox{for }a=\pm1.$$
This easily implies that $\alpha=0$ that is the solution $u$ is symmetric. In this case $\omega$ turns out to be the unique solution of the equation
$$\tan\omega=\frac{2}{m\omega}\;.$$
We notice that in dimension $d=1$ the first nontrivial Neumann eigenvalue $\lambda_N(\O)$ coincides with the first Dirichlet eigenvalue $\lambda_D(\O)$, so one could also repeat the argument in the proof of Theorem \ref{nonsimm} to conclude again that $u$ has to be symmetric.

%%%%%%%%%%%%%%%%%%%%%%%%%%%%%%
\section{Further remarks}\label{sfurther}

We point out the two shape optimization problems that arise from \eqref{pb1} and \eqref{pb2} when we let $\O$ to vary too. Denoting by $E_\O(h)$, $\HH_{m,\O}$, $\lambda_{m,\O}$ the same mathematical objects introduced above, where the dependence on the domain $\O$ is stressed, we may consider the problem of determining the optimal domain $\O$, among the ones having a prescribed Lebesgue measure, to obtain the best insulation. Considering the two criteria above, the first shape optimization problem becomes
\be\label{shopt1}
\min\big\{E_\O(h)\ :\ h\in\HH_m,\ |\O|\le1\big\}.
\ee
As done in Section \ref{senergy} we can eliminate the variable $h$ and the minimization problem becomes
$$\min\left\{\frac{\int_\O|\nabla u|^2\,dx+\frac1m\Big(\int_{\partial\O}|u|\,d\HH^{d-1}\Big)^2}{\Big(\int_\O fu\,dx\Big)^2}\ :\ u\in H^1(\O),\ |\O|\le1\right\}\;.$$
A constraint $\O\subset D$ can be added, where $D$ is a given {\it bounded} domain.

Assume that $f\in L^\infty(D)$, $f\ge0$, $\O$ is Lipschitz and $u\ge0$ is a minimizer in \eqref{pb1aux.1}. Following the ideas developed in \cite{bugi10,bugi15sv}, we observe that the function $u$ extended by zero on $D\setminus\O$ belongs to $SBV(D)$. Consequently, one can relax the shape optimization problem, by replacing the couple $(\O,u)$ with a new unknown $v\in SBV(D,\R^+)$, the set $\O$ being identified with $\{v>0\}$. Then, the shape optimization problem becomes
\be\label{shopt1r}
\min\left\{\frac{\int_D|\nabla^a v|^2\,dx+\frac1m\Big(\int_{J_v}v^++v^-\,d\HH^{d-1}\Big)^2}{\Big(\int_D fv\,dx\Big)^2}\ :\ v\in SBV(D,\R^+),\ |\{v\ne0\}|\le1\right\}.
\ee
Above, $\nabla^a v$ denotes the absolute continuous part of the distributional gradient of $v$, with respect to the Lebesgue measure, $J_v$ denotes the jump set of $v$ and $v^+,v^-$ the upper and lower approximate limits of $v$ at a jump point. Problem \eqref{shopt1r} is indeed a relaxation of the original shape optimization problem, as a consequence of the density result \cite[Theorem 3.1]{coto99}.

Problem \eqref{shopt1r} has a solution, the proof being done by the direct method of the calculus of variations. The main ingredient is the $SBV$ compactness theorem adapted as in \cite[Theorem 2]{bugi10} to Robin boundary conditions, with the only difference that compactness occurs in $L^1(D)$. Using the hypothesis $f\in L^\infty(D)$, the existence of an optimal $SBV$ solution follows. The fact that the optimal $SBV$ solution corresponds to a "classical" solution needs a quite technical investigation and faces the difficulty that the norm of the traces at the jump set is controlled only in $L^1$.

\medskip
The case $f\equiv1$ is particular and requires special attention. In some sense, the optimization question is related to a torsion-like problem of Saint-Venant type. For Robin boundary conditions it was proved in \cite{bugi15sv} that the ball is a minimizer. One could reasonably expect that this result should hold as well for the shape optimization problem \eqref{shopt1}, in the case $f\equiv1$ and $D=\R^d$. By direct computation, one can notice that among all balls satisfying the measure constraint, the largest is the solution. Moreover, if $\O$ a union of disjoint balls of different radii, the solution $u$ is constant on the boundary of one ball and has to vanish on the boundary of all the others, as already seen in Example \ref{twoballs}. Then, using the classical Saint-Venant inequality, a direct computation leads to the optimality of one single ball.

\medskip

Similarly, the second shape optimization problem, arising from the minimization of the first eigenvalue, considered in Section \ref{seigen} is
$$\min\left\{\frac{\int_\O|\nabla u|^2\,dx+\frac1m\Big(\int_{\partial\O}|u|\,d\HH^{d-1}\Big)^2}{\int_\O u^2\,dx}\ :\ u\in H^1(\O),\ |\O|\le1\right\}\;.$$
Let us notice that the shape optimization problem above, does not have a solution, as soon as the dimension $d$ of the space is larger than $2$. Indeed, when $d\ge3$ we may consider the domains $\O_n=B_{1/n}$, the ball of radius $1/n$, and take the test function $u_n=1$. Then
$$\lambda_m(\O_n)\le\frac1m\frac{\Big(\HH^{d-1}(\partial B_{1/n})\Big)^2}{|B_{1/n}|}=\frac{d^2\omega_d}{mn^{d-2}}$$
and, as $n\to\infty$, we get $\lambda_m(\O_n)\to0$. Similarly, if $|\O|=1$ and $\O$ is the union of $n$ disjoint balls of volume $1/n$ each, we obtain
$$\lambda_m(\O)\le\lambda_m(B_{1/n})\to0.$$

\medskip

It would be interesting to prove that for the two problems above (with $d=2$ for the second one) an optimal shape $\O_{opt}$ exists, even if in a rather large class of domains with very mild regularity. Also, in the case of the energy problem with $f=1$ (often called {\it torsion} problem) it would be interesting to prove (or disprove) that the optimal domain is a ball.

Nevertheless, in the proposition below we prove that for $m<m_0$ the ball cannot be a stationary domain for the functional $\lambda_m(\O)$.

\begin{teo}\label{nonstat}
Let $m<m_0$; then the ball cannot be a stationary domain for the functional $\lambda_m(\O)$, where stationarity is intended with repect to smooth perturbations of the boundary, that is for every smooth and compactly supported field $V(x)$ with $\dive V=0$, setting $\O_\eps=(Id+\eps V)(\O)$, we have
$$\lambda_m(\O_\eps)-\lambda_m(\O)=o(\eps).$$
\end{teo}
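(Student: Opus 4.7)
I would use a non-radial minimizer $\bar u$ of \eqref{pb2aux} on $B_R$ as a test function on each perturbed domain $\O_\eps=(\mathrm{Id}+\eps V)(B_R)$, bounding $\lambda_m(\O_\eps)$ above by the Rayleigh quotient of the transplanted $u_\eps=\bar u\circ(\mathrm{Id}+\eps V)^{-1}$, and find a volume-preserving direction in which this upper bound decreases at first order. By Theorem \ref{nonsimm} and its proof I may take $\bar u\ge 0$ axisymmetric about some $\omega_0\in\mathbb{S}^{d-1}$, non-radial, with $\|\bar u\|_{L^2}=1$, vanishing on a nontrivial relatively open $\Gamma_0\subset\partial B_R$ and satisfying \eqref{pb2pde2} with $c:=m^{-1}\int_{\partial B_R}\bar u\,d\sigma>0$; write $\Gamma_+:=\partial B_R\setminus\Gamma_0$. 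Given $V\in C_c^\infty(\R^d;\R^d)$ with $\dive V=0$, a change of variables in $J_m^{\O_\eps}(u_\eps)$, using $\det D\Phi_\eps=1+O(\eps^2)$, integration by parts with $-\Delta\bar u=\lambda_m\bar u$ and the Robin condition $\partial\bar u/\partial\nu=-c$ on $\Gamma_+$, and tangential integration by parts on $\partial B_R$ (whose contact-line boundary terms vanish since $\bar u|_{\partial\Gamma_+}=0$) yield
\[\lambda_m(\O_\eps)\le J_m^{\O_\eps}(u_\eps)=\lambda_m(B_R)+\eps\int_{\partial B_R}g\,(V\cdot\nu)\,d\sigma+o(\eps),\]
with $g=|\nabla_\tau\bar u|^2+\tfrac{2c(d-1)}{R}\bar u-\lambda_m(B_R)\bar u^2-c^2$ on $\Gamma_+$ and $g=-(\partial\bar u/\partial\nu)^2$ on $\Gamma_0$.

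The crux is to show $g$ is not $\HH^{d-1}$-a.e.\ constant. Writing $\psi(s)=\bar u(R\omega)$ with $s=\omega\cdot\omega_0$, supported in $[s_0,1]$ with $\psi(1)=M>0$, the vanishing of $|\nabla_\tau\bar u|$ at the pole together with the hypothesis $g\equiv K$ gives $K=2cHM-\lambda_m M^2-c^2$ (where $H=(d-1)/R$) and, after substitution, the trace identity
\[|\nabla_\tau\bar u|^2=(M-\psi)\bigl(2cH-\lambda_m(M+\psi)\bigr)\qquad\text{on }\Gamma_+.\]
Non-negativity of the right-hand side for $\psi$ close to $M$ forces $M\le cH/\lambda_m$; but as $\psi\to 0^+$ along $\Gamma_+$ (approaching the contact line $\partial\Gamma_+$) the right-hand side tends to $M(2cH-\lambda_m M)\ge McH>0$, contradicting the vanishing of $\nabla_\tau\bar u$ on $\Gamma_0$ (where $\bar u\equiv 0$) once continuity of $\nabla_\tau\bar u$ across $\partial\Gamma_+$ is granted---a $C^{1,1/2}$-type regularity result standard for mixed Robin/Dirichlet boundary problems on smooth boundaries with smooth interface.

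With $g$ non-constant in hand, I choose a smooth mean-zero $\phi$ on $\partial B_R$ with $\int_{\partial B_R}g\phi\,d\sigma<0$ (e.g.\ a smooth approximation of $-(g-\bar g)$ with $\bar g$ the mean of $g$), and extend $\phi\nu$ to a compactly supported divergence-free field $V$ by a standard Neumann construction. Then $\lambda_m(\O_\eps)-\lambda_m(B_R)\le\eps\int_{\partial B_R}g\phi\,d\sigma+o(\eps)$ with strictly negative linear coefficient, hence the difference is not $o(\eps)$ and $B_R$ fails to be stationary. The main obstacle is the regularity step that closes the contradiction in the previous paragraph, which couples the $H^2$-regularity of $\bar u$ on $\Gamma_+$ with the free-boundary analysis of the Signorini-type condition on $\Gamma_0$.
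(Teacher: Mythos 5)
Your first half coincides with the paper's argument: the transplanted test function $u_\eps=\bar u\circ(\mathrm{Id}+\eps V)^{-1}$, the expansion $\lambda_m(\O_\eps)\le\lambda_m(B_R)+\eps\int_{\partial B_R}g\,(V\cdot\nu)\,d\sigma+o(\eps)$ with $g=|\nabla_\tau \bar u|^2-|\nabla_\nu \bar u|^2-\lambda_m \bar u^2+\frac2m\big(\int_{\partial\O}\bar u\big)H\bar u$, and the reduction of stationarity to ``$g$ is $\HH^{d-1}$-a.e.\ constant'' via divergence-free fields with prescribed mean-zero normal trace. One point you should not simply attribute to Theorem \ref{nonsimm}: that $\Gamma_0$ is a \emph{nontrivial} cap (not empty, not a single point) requires the separate averaging argument the paper inserts at this stage --- if $\Gamma_0$ were at most a point, $\partial \bar u/\partial\nu$ would be constant a.e.\ on $\partial B_R$ and the angular average of $\bar u$ would be a positive radial minimizer, contradicting $m<m_0$.

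The genuine gap is the step you yourself flag: continuity of $\nabla_\tau\bar u$ across the contact line $\partial\Gamma_+$. Your justification --- that this is ``standard for mixed Robin/Dirichlet boundary problems with smooth interface'' --- is not correct: for a mixed Dirichlet/Neumann or Dirichlet/Robin problem with a \emph{prescribed} smooth interface the solution is generically only H\"older-$\frac12$ there and the gradient blows up like $\mathrm{dist}^{-1/2}$ (think of $\mathrm{Im}\sqrt{z}$ in a half-plane, Dirichlet on one half-line and Neumann on the other). The continuity you need is the $C^{1,\alpha}$ regularity for the \emph{Signorini} (boundary obstacle) problem, coming from the complementarity structure $\bar u\ge0$, $\partial_\nu\bar u+c\ge0$, $\bar u(\partial_\nu\bar u+c)=0$; this is a deep external input (Caffarelli, Athanasopoulos--Caffarelli), which would moreover have to be checked for a curved boundary and the zero-order term $\lambda_m\bar u$. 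It is not a removable technicality: your trace identity $|\nabla_\tau\bar u|^2=(M-\psi)\bigl(2cH-\lambda_m(M+\psi)\bigr)$ with limit $M(2cH-\lambda_m M)>0$ at the contact line is perfectly compatible with a merely Lipschitz boundary trace, so without the free-boundary regularity no contradiction arises. The paper sidesteps all of this by reading the constancy of $g$ on $\Gamma_0$ rather than on $\Gamma_+$: there $g=-(\partial\bar u/\partial\nu)^2$, hence $\partial\bar u/\partial\nu$ is a constant $c$ on the open cap $\Gamma_0$, so $\bar u$ and the radial solution $w$ of the ODE with $w(R)=0$, $w'(R)=c$ share the same Cauchy data on $\Gamma_0$; Holmgren's uniqueness theorem then gives $\bar u=w(|x|)$ near $\Gamma_0$, hence $\bar u$ radial, contradicting Theorem \ref{nonsimm}. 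This only uses classical regularity in the interior of $\Gamma_0$, where $\bar u$ solves a Dirichlet problem on an analytic boundary portion, and is the route you should follow to close the argument.
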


\begin{proof}
Let us take the ball of unitary radius, assume by contradiction that it is stationary with respect to all smooth perturbation of the boundary that preserve the Lebesgue measure, and set
$$\Gamma=\{x\in\partial\O\ :\ u(x)=0\}.$$
Up to a spherical symmetrization, the set $\Gamma$ above is a nonvanishing spherical cap (an arc of circle in dimension two); indeed if it reduces to a point (or to the empty set), the normal derivative $\partial u/\partial\nu$ is constant on $\partial\O$ except at most a point; averaging with respect to $\omega$ the function $u(r,\omega)$ provides then a symmetric positive first eigenfunction. Thanks to Theorem \ref{nonsimm} this implies $m\ge m_0$ which is not true.
Set now $\O_\eps=\{x+\eps V(x)\ :\ x\in\O\}$ with $\dive V=0$ and $V$ orthogonal to $\partial\O$, and let $u\ge0$ be a first eigenfunction of problem \eqref{pb2aux}. Setting
$$u_\eps(x)=u\big((Id+\eps V)^{-1}(x)\big),$$
by standard computations (see for instance \cite{hepi05}) and assuming $u$ is smooth enough up to the boundary (which is true in our case since $\O$ is the ball) we obtain
$$\lambda_m(\O_\eps)\le J_m(u_\eps)=\lambda_m(\O)+\eps\int_{\partial\O}j_m(u)\,V\cdot\nu\,d\HH^{d-1}+o(\eps),$$
where
$$j_m(u)=|\nabla_\tau u|^2-|\nabla_\nu u|^2-\lambda_m u^2+\frac2m\Big(\int_{\partial\O}u\Big)H(x)u\;,$$
being $H$ the mean curvature in $\O$ (in our case constant, since $\O$ is the ball). By the stationarity of $\O$ and since
$$\int_{\partial\O}V\cdot\nu\,d\HH^{d-1}=0$$
we get
$$|\nabla_\tau u|^2-|\nabla_\nu u|^2-\lambda_m u^2+\frac2m\Big(\int_{\partial\O}u\Big)H(x)u=const\;.$$
In particular, this gives, for a suitable constant $c$
$$\frac{\partial u}{\partial\nu}=c\qquad\hbox{on the set }\Gamma=\{x\in\partial\O\ :\ u(x)=0\}.$$
The conclusion that $u$ has to be radial now follows by the Holmgren uniqueness theorem. Indeed, the radial solution, given by the ODE
$$-w''(r)+\frac{d-1}{r}w'(r)=\lambda_m w(r),\quad w(1)=0,\quad w'(1)=c,$$
also satisfies the PDE
$$\begin{cases}
-\Delta u=\lambda_m u\\ u=0\hbox{ on }\Gamma,\quad\frac{\partial u}{\partial\nu}=c\hbox{ on }\Gamma
\end{cases}$$
which gives that $u(x)=w(|x|)$ in a neighborhood of $\partial\O$, which is impossible by Theorem \ref{nonsimm} since we assumed $m<m_0$.
%by the same argument used in \cite{heou} (see also \cite{frga08}), that we outline here for a sake of completeness. Take
\end{proof}

As said above, in the case of the energy we are not able to prove that the ball solves the shape optimization problem \eqref{shopt1}; nevertheless, we can say it is a stationary domain, in the same sense of Theorem \ref{nonstat}, that is
$$E(\O_\eps)-E(\O)=o(\eps)$$
for every $\O_\eps=(Id+\eps V)(\O)$, where the quantity $E(\O)$ is defined as
$$E(\O)=\min\left\{\frac12\int_\O|\nabla u|^2\,dx+\frac{1}{2m}\Big(\int_{\partial\O}|u|\,d\HH^{d-1}\Big)^2-\int_\O u\,dx\ :\ u\in H^1(\O)\right\}.$$
Indeed, the solution $u$ on the ball is positive and radially symmetric by the uniqueness result of Proposition \ref{unique}. Therefore, the solutions $u_\eps$ relative to $\O_\eps$ are still positive and solve a Neumann problem of the form
$$\begin{cases}
-\Delta u_\eps=1\hbox{ in }\O_\eps\\
\ds\frac{\partial u_\eps}{\partial\nu}=-\frac1m\int_{\partial\O_\eps}u_\eps\,d\HH^{d-1}.
\end{cases}$$
The same argument used in the proof of Theorem \ref{nonstat} provides the stationarity condition
$$|\nabla_\tau u|^2-|\nabla_\nu u|^2-u+\frac2m\Big(\int_{\partial\O}u\Big)H(x)u=const$$
which is fulfilled in our case since we know that $u$ is radial.

\bigskip\ack The work of the first author is part of the ANR Optiform research project, ANR-12-BS01-0007. The work of the second author is part of the project 2010A2TFX2 {\it``Calcolo delle Variazioni''} funded by the Italian Ministry of Research and University. The second and third authors are members of the {\it``Gruppo Nazionale per l'Analisi Matematica, la Probabilit\`a e le loro Applicazioni''} (GNAMPA) of the {\it``Istituto Nazionale di Alta Matematica''} (INDAM).

%%%%%%%%%%%%%%%%%%%%%%%%%%%%%%

\bigskip
{\small\noindent
Dorin Bucur:
Laboratoire de Math\'ematiques (LAMA),
Universit\'e de Savoie\\
Campus Scientifique,
73376 Le-Bourget-Du-Lac - FRANCE\\
{\tt dorin.bucur@univ-savoie.fr}\\
{\tt http://www.lama.univ-savoie.fr/$\sim$bucur/}

\bigskip\noindent
Giuseppe Buttazzo:
Dipartimento di Matematica,
Universit\`a di Pisa\\
Largo B. Pontecorvo 5,
56127 Pisa - ITALY\\
{\tt buttazzo@dm.unipi.it}\\
{\tt http://www.dm.unipi.it/pages/buttazzo/}

\bigskip\noindent
Carlo Nitsch:
Dipartimento di Matematica e Applicazioni,
Universit\`a di Napoli ``Federico II''\\
Via Cintia, Monte S. Angelo,
80126 Napoli - ITALY\\
{\tt carlo.nitsch@unina.it}\\
{\tt http://wpage.unina.it/c.nitsch/}

\end{document}